\newtheorem{lemma}{Lemma}
\newtheorem{theo}{Theorem}
\newenvironment{proof}{\medskip\par\noindent{\bf Proof}}{\hfill $\Box$
\medskip\par}
\newcommand{\C}{\mathbb{C}}
\newcommand{\R}{\mathbb{R}}
\newcommand{\N}{\mathbb{N}}
\newcommand{\Z}{\mathbb{Z}}
\newcommand{\I}{\rm i}
\begin{document}
\title{On the convergence of generalized power series solutions of $q$-difference equations}
\author{Renat Gontsov, Irina Goryuchkina, Alberto Lastra}
\date{}
\maketitle
\thispagestyle{empty}

\begin{abstract}
A sufficient condition for the convergence of a generalized formal power series solution to 
an algebraic $q$-difference equation is provided. The main result leans on a geometric property 
related to the semi-group of (complex) power exponents of such a series. This property corresponds 
to the situation in which the small divisors phenomenon does not arise. Some examples illustrating 
the cases where the obtained sufficient condition can be or cannot be applied are also depicted.
\end{abstract}

\noindent {\bf Keywords:} convergence; generalized formal power series; $q$-difference equation.
\medskip

\noindent {\bf MSC Class:} 39A13 (Primary); 39A45 (Secondary). 

\section{Introduction}

Compared to formal solutions of algebraic equations, which are Puiseux power series at most, algebraic ordinary {\it differential} equations (ODEs)
in general may have formal solutions in the form of power series with complex power exponents. First examples of such formal solutions, without a general approach to computing formal solutions of such a nature to algebraic ODEs though, can be found in papers studying Painlev\'e equations near their fixed singular points (see \cite{j,k,s,t}). Later, general computational methods were proposed by J.\,Della Dora, F.\,Richard-Jung \cite{ddrj} and A.\,D.\,Bruno \cite{b}, based on the generalization of the Newton--Puiseux polygonal method for algebraic equations. The first authors have developed the methods of D.\,Yu.\,Grigoriev, M.\,Singer \cite{gs} and J.\,Cano \cite{c} which, in turn, go back to the results of H.\,Fine \cite{f}. Bruno's algorithmic methods use 
the same principles as the papers by Della Dora, Richard-Jung and their predecessors, partially repeating the latter and extending as well. We may also note that complex power exponents in the asymptotic of actual solutions and their computation {\it via} the generalized Newton--Puiseux polygonal method already arise at the end of the nineteenth century in the works by M.\,Petrovitch \cite{p}. Power series with complex power exponents thus become a fairly familiar object in the theory of algebraic ODEs. Algebraic $q$-{\it difference} equations also may have such formal solutions, which are a quite new object for 
studying in the field though. A recent work by Ph.\,Barbe, J.\,Cano, P.\,Fortuny Ayuso, W.\,P.\,McCormick \cite{bcfm} dealing with formal solutions of algebraic $q$-difference equations in the form of Hahn series with {\it real} power exponents is one of the first contributions in this direction.

In our paper, we study a question of the convergence of formal power series solutions with complex power exponents to an algebraic $q$-difference equation. 
The corresponding problem in the {\it differential} case was treated by B.\,Malgrange \cite{m}, J.\,Cano \cite{c} for a formal {\it Taylor series} solution, 
who have proposed sufficient conditions of its convergence and, more generally, estimated the growth of the series coefficients in the case where it 
diverges (the Maillet--Malgrange theorem). Their results were generalized to formal power series solutions with {\it complex} power exponents in \cite{gg0}, \cite{gg}. As for the $q$-{\it difference} case, a question of convergence and, more generally, the Maillet--Malgrange type theorem have been studied
so far only for formal {\it Taylor series} solutions \cite{bcfm,b1,dv,lz,zh}. Even for such solutions, an additional phenomenon of small divisors may arise 
for $|q|=1$ (see \cite{b1,dv}), in contrast to the differential case where one does not meet this phenomenon studying a question of convergence.
For power series solutions with {\it complex} power exponents, the small divisors phenomenon may occur not only in the case of $|q|=1$. Beginning the study 
of such formal solutions to algebraic $q$-difference equations, we will restrict ourselves here to the case where small divisors do not arise.

We consider a $q$-difference equation 
\begin{equation}\label{e1}
F(z,y,\sigma y,\sigma^2 y,\ldots,\sigma^{n}y)=0,
\end{equation}
where $F=F(z,y_0,y_1,\ldots,y_n)$ is a polynomial and $\sigma$ stands for the dilatation operator 
$$
\sigma(f(z))=f(qz),
$$ 
$q\ne0,1$ being a fixed complex number.

For any sequence of complex numbers $(\lambda_j)_{j\geqslant0}$ that satisfy 
\begin{itemize}
\item[(i)] ${\rm Re}\,\lambda_j\geqslant 0$ for all $j\geqslant0$,
\item[(ii)] ${\rm Re}\,\lambda_j\leqslant\hbox{Re}\,\lambda_{j+1}$ for all $j\geqslant0$,
\item[(iii)] $\lim_{j\to\infty}{\rm Re}\,\lambda_j=+\infty$,
\end{itemize}
one may consider a formal power series with complex power exponents $\lambda_j$, $\sum_{j=0}^{\infty}c_jz^{\lambda_j}$, which will be called a {\it generalized} formal power series.

We note that the conditions (i), (ii), (iii) make the set of all generalized formal power series an algebra over $\C$. The definition of the dilatation operator extends naturally to this algebra after fixing the value $\ln q$ by the condition $0\leqslant{\rm arg}\,q<2\pi$:
$$
\sigma\Bigl(\sum_{j=0}^{\infty}c_jz^{\lambda_j}\Bigr)=\sum_{j=0}^{\infty}c_jq^{\lambda_j}z^{\lambda_j}.
$$
Thus the notion of a generalized formal power series solution of (\ref{e1}) is correctly defined in view of the above remarks: such a series $\varphi$ 
is said to be a {\it formal solution} of (\ref{e1}) if the substitution of $\varphi$ into the polynomial $F$ leads to a generalized power series with zero coefficients. We will start with an example of such a formal solution (Section 2), then give some auxiliary statements concerning generalized power series solutions of algebraic $q$-difference equations (Section 3) and propose a sufficient condition of the convergence of such solutions (Theorem 1 in Section 4). In the last section we give and discuss examples illustrating different situations concerning Theorem 1.
 
\section{Preliminary examples}

Let us start with an example of the differential equation Painlev\'e III with the parameters $a=b=0$, $c=d=1$:
$$
\frac{d^2y}{dz^2}=\frac1y\left(\frac{dy}{dz}\right)^2-\frac1z\frac{dy}{dz}+y^3+\frac1y.
$$
Rewritten with respect to the differential operator $\delta=z(d/dz)$ this becomes
\begin{equation}\label{P3}
y\,\delta^2 y-(\delta y)^2-z^2y^4-z^2=0. 
\end{equation}
This equation has a two-parameter family of generalized formal power series solutions \cite{Gr,s}:
\begin{equation}\label{P3sol}
\varphi=C\,z^r+\sum_{\lambda\in\rm K}c_{\lambda}z^{\lambda},
\end{equation}
where $C\ne0$ is an arbitrary complex number and $r$ is any complex number with $-1<{\rm Re}\,r<1$. 
The other coefficients $c_{\lambda}$ are uniquely determined by $C$, and the set $\rm K$ of power exponents is of the form
\begin{equation}\label{K}
{\rm K}=\{r+m_1(2-2r)+m_2(2+2r)\mid m_1,m_2\in{\mathbb Z}_+, m_1+m_2>0\}.
\end{equation}
This can be observed by making the change of variable $y=C\,z^r+z^r\,u$, under which (\ref{P3}) is
transformed to an equation of the form
\begin{equation}\label{P3tr}
C\,\delta^2u=z^{2-2r}+C^4\,z^{2+2r}+(\delta u)^2-u\,\delta^2u+z^{2+2r}u\,P_3(u),
\end{equation}
$P_3$ being a polynomial of the third degree. Thus, searching for its generalized power series solution
$u=\sum_{j=0}^{\infty}c_j\,z^{\lambda_j}$ for a given non-zero $\lambda_0$, one comes to the relation
$$
C\sum_{j=0}^{\infty}c_j\lambda_j^2\,z^{\lambda_j}=z^{2-2r}+C^4\,z^{2+2r}+(\delta u)^2-u\,\delta^2u+ z^{2+2r}u\,P_3(u).
$$
From this one deduces that $\lambda_0$ belongs to the additive semi-group generated by the numbers $2-2r$, $2+2r$ and finds $c_0$, further proceeding recursively with the other $\lambda_j$'s, $c_j$'s. Moreover, the obtained generalized power
series solution converges in sectors of small radius with the vertex at the origin and of opening less than $2\pi$ (see \cite{gg0}, \cite{s}). We also note that in the case of non-zero parameters $a$, $b$ of the Painlev\'e III equation, the set $\rm K$ of the power exponents of the solution (\ref{P3sol}) is more dense, with the generators $1-r$, $1+r$.

If we consider a $q$-difference analogue of the equation (\ref{P3}) formally changing $\delta$ by $\sigma$,
\begin{equation}\label{qP3}
y\,\sigma^2 y-(\sigma y)^2-z^2y^4-z^2=0, 
\end{equation}
we will see that the existence of a formal solution of the same form (\ref{P3sol}), (\ref{K}) as in the differential case, is guaranteed not for any $r$ with $-1<{\rm Re}\,r<1$. Indeed, after the change of the unknown $y=C\,z^r+z^r\,u$, one comes to a $q$-difference equation
$$
C\,(\sigma-1)^2u=q^{-2r}z^{2-2r}+C^4\,q^{-2r}z^{2+2r}+(\sigma u)^2-u\,\sigma^2u+z^{2+2r}u\,P_3(u),
$$ 
whose right-hand side is similar to that of (\ref{P3tr}). However, under the action of the left-hand side operator 
$C\,(\sigma-1)^2$ on a potential generalized formal power series solution $u=\sum_{j=0}^{\infty}c_j\,z^{\lambda_j}$,
with $\lambda_j$'s belonging to the additive semi-group generated by the numbers $2-2r$, $2+2r$, this $u$ turns into
$$
C\sum_{j=0}^{\infty}(q^{\lambda_j}-1)^2c_j\,z^{\lambda_j}.
$$
This means that one can guarantee the existence of a formal solution of the form (\ref{P3sol}), (\ref{K}) to the equation (\ref{qP3}), if none of the roots $2\pi{\rm i}k/\ln q$, $k\in\Z$, of the equation $q^{\lambda}=1$ belongs to that semi-group. In particular, this holds {\it if the numbers $1-r$, $1+r$, $\pi{\rm i}/\ln q$ are linearly independent over $\Z$ or if the numbers $1-r$, $1+r$ lie in the open half-plane in $\C$ on one side of the line with the slope $\ln|q|/{\rm arg}\,q$ passing through the origin.} As we will see further, in the second case the convergence of such a formal solution in sectors of small radius with the vertex at the origin and of opening less than $2\pi$ is guaranteed.

\section{Preliminary statements}

Let 
\begin{equation}\label{formsol}
\varphi=\sum_{j=0}^{\infty}c_jz^{\lambda_j}, \qquad c_j\in\C^*,
\end{equation} 
be a generalized formal power series solution of (\ref{e1}), that is,
$$
F(z, \Phi)=0, \qquad \Phi:=(\varphi,\sigma\varphi,\ldots,\sigma^{n}\varphi).
$$
\noindent\textbf{Assumption (A):} For each $k=0,\ldots,n$, the generalized formal power series $F'_{y_k}(z,\Phi)$ is 
of the form
$$
\frac{\partial F}{\partial y_k}(z,\Phi)=A_kz^{\lambda}+B_kz^{\tilde\lambda_k}+\ldots, \qquad {\rm Re}\,\tilde\lambda_k>{\rm Re}\,\lambda,
$$
$\lambda$ being the same for all $k=0,\ldots,n$ and at least one of the $A_k$'s being non-zero.  
\medskip

Under the above assumption, let us define a non-zero polynomial 
$$
L(\xi)=\sum_{k=0}^nA_k\,\xi^k
$$ 
of degree $\leqslant n$. Note that some of $A_k$'s may be equal to zero.
\medskip

\noindent{\bf Assumption (B):} There is $j_0\in\Z_+$ such that for the power exponents $\lambda_j$ of the formal solution (\ref{formsol}) of (\ref{e1}) satisfying Assumption (A), there holds
$$
L(q^{\lambda_j})\ne0, \quad \mbox{for all} \quad j\geqslant j_0.
$$

\noindent{\bf Remark 1.} In the case where all $\lambda_j$'s are real and $|q|\ne1$, this assumption holds automatically, 
since $\lim_{j\to\infty}q^{\lambda_j}=\infty$ (if $|q|>1$) or $\lim_{j\to\infty}q^{\lambda_j}=0$ (if $|q|<1$). 
\medskip

In this section we prove two lemmas which are auxiliary for deducing the main theorem on the convergence of the formal solution (\ref{formsol}). Let us preliminary fix the following notations.

For any $m\geqslant0$ we denote by $\varphi_m$ the truncation of $\varphi$ at the index $m$, i.\,e., 
$$
\varphi_m=\sum_{j=0}^mc_jz^{\lambda_j}, \qquad \Phi_m:=(\varphi_m,\sigma\varphi_m,\ldots,\sigma^{n}\varphi_m).
$$ 
Then $\varphi$ is represented in the form
$$
\varphi=\varphi_m+z^{\lambda_m}\psi, \qquad \psi=\sum_{j=1}^{\infty}c_{m+j}\,z^{\lambda_{m+j}-\lambda_m},
$$ 
and, respectively,
$$
\Phi=\Phi_m+z^{\lambda_m}\Psi, \qquad \Psi=(\psi_0,\psi_1,\ldots,\psi_n):=(\psi,(q^{\lambda_m}\sigma)\psi,\ldots,(q^{\lambda_m}\sigma)^n\psi).
$$

The first lemma guarantees that equation (\ref{e1}) can be reduced to a standard form which will be useful for further considerations.

\begin{lemma}\label{L1}
Under Assumption $(\rm A)$, for any $m\geqslant0$ such that 
$$
{\rm Re}\,\lambda_m>{\rm Re}\,\lambda \qquad and \qquad {\rm Re}\,\lambda_{m+1}>{\rm Re}\,\lambda_m,
$$ 
a transformation of the dependent variable  
\begin{equation}\label{e6}
y=\varphi_{m}+z^{\lambda_m}u
\end{equation}
transforms $(\ref{e1})$ into an equation of the form
\begin{equation}\label{e7}
L(q^{\lambda_m}\sigma)u=M(z,u,\sigma u,\ldots,\sigma^nu),
\end{equation}
where $M$ is a finite linear combination of monomials of the form
$$
z^{\alpha}u^{p_0}(\sigma u)^{p_1}\ldots(\sigma^nu)^{p_n}, \qquad \alpha\in\C, \quad {\rm Re}\,\alpha>0, \quad p_i\in\Z_+.
$$
\end{lemma}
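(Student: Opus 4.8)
The plan is to substitute the change of variable (\ref{e6}) into (\ref{e1}) and to expand $F$ by the Taylor formula about the point $\Phi_m$. Since $\sigma(z^{\lambda_m}u)=q^{\lambda_m}z^{\lambda_m}\sigma u$, one has $\sigma^k(z^{\lambda_m}u)=z^{\lambda_m}(q^{\lambda_m}\sigma)^ku$, whence
$$
\sigma^ky=\sigma^k\varphi_m+z^{\lambda_m}w_k,\qquad w_k:=(q^{\lambda_m}\sigma)^ku .
$$
Setting $W=(w_0,\dots,w_n)$ and using the multivariate Taylor formula in $y_0,\dots,y_n$, I would write
$$
F(z,\Phi_m+z^{\lambda_m}W)=\sum_{p}\frac{1}{p!}\,\frac{\partial^{|p|}F}{\partial y^{p}}(z,\Phi_m)\,z^{|p|\lambda_m}\,W^{p},
$$
a \emph{finite} sum in which every coefficient $\partial^{|p|}F/\partial y^{p}(z,\Phi_m)$ is a finite generalized power series, because $F$ is a polynomial and $\varphi_m$ is a truncation. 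As $w_k^{p_k}=q^{kp_k\lambda_m}(\sigma^ku)^{p_k}$, each $W^{p}$ is a constant multiple of $u^{p_0}(\sigma u)^{p_1}\cdots(\sigma^nu)^{p_n}$; so, after I divide the identity $F(z,\Phi_m+z^{\lambda_m}W)=0$ by $z^{\lambda_m+\lambda}$, the only tasks are to single out the operator $L(q^{\lambda_m}\sigma)u$ and to verify that every remaining coefficient is a generalized power series all of whose exponents have positive real part.

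The decisive contribution comes from the first-order part $|p|=1$, namely $z^{\lambda_m}\sum_k\partial F/\partial y_k(z,\Phi_m)\,w_k$. Since $\Phi-\Phi_m=z^{\lambda_m}\Psi$ has all its exponents of real part larger than ${\rm Re}\,\lambda$ (indeed larger than ${\rm Re}\,\lambda_m$), Assumption (A) passes from $\Phi$ to $\Phi_m$ and gives $\partial F/\partial y_k(z,\Phi_m)=A_kz^{\lambda}+(\text{terms of real exponent}>{\rm Re}\,\lambda)$. Hence this first-order part equals $z^{\lambda_m+\lambda}\sum_kA_kw_k$ plus a remainder, and $\sum_kA_kw_k=\sum_kA_k(q^{\lambda_m}\sigma)^ku=L(q^{\lambda_m}\sigma)u$, which is the left-hand side of (\ref{e7}). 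After division by $z^{\lambda_m+\lambda}$ the remainder $z^{-\lambda}\sum_k[\partial F/\partial y_k(z,\Phi_m)-A_kz^{\lambda}]\,w_k$ has, by Assumption (A), only exponents of positive real part and is linear in the $\sigma^ku$, so it is of the type allowed in $M$.

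It remains to handle $|p|=0$ and $|p|\ge 2$. For $|p|=0$ the coefficient is $F(z,\Phi_m)$; since $\varphi$ solves (\ref{e1}) we have $F(z,\Phi)=0$, so $F(z,\Phi_m)=-\bigl(F(z,\Phi)-F(z,\Phi_m)\bigr)$, and a first-order Taylor estimate together with ${\rm Re}\,\lambda_{m+1}>{\rm Re}\,\lambda_m$ shows that all its exponents have real part at least ${\rm Re}\,\lambda+{\rm Re}\,\lambda_{m+1}>{\rm Re}\,(\lambda_m+\lambda)$; after division it contributes to $M$ a power series in $z$ alone with exponents of positive real part. For $|p|\ge 2$, after division the coefficient of $W^{p}$ carries the factor $z^{(|p|-1)\lambda_m-\lambda}$ multiplying $\partial^{|p|}F/\partial y^{p}(z,\Phi_m)$, and condition (i) forces ${\rm Re}\,\lambda_0\ge 0$, so the latter has only nonnegative real exponents; the total real exponent is therefore at least $(|p|-1){\rm Re}\,\lambda_m-{\rm Re}\,\lambda\ge{\rm Re}\,\lambda_m-{\rm Re}\,\lambda>0$. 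This last estimate is the step I expect to be the main obstacle, as it is exactly where both hypotheses ${\rm Re}\,\lambda_m>{\rm Re}\,\lambda$ and ${\rm Re}\,\lambda_0\ge 0$ are indispensable. Collecting the three groups of terms, all of the required monomial form $z^{\alpha}u^{p_0}(\sigma u)^{p_1}\cdots(\sigma^nu)^{p_n}$ with ${\rm Re}\,\alpha>0$, yields the right-hand side $M$ of (\ref{e7}).
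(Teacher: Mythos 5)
Your proposal is correct and follows essentially the same route as the paper's proof: a Taylor expansion of $F$ about the truncated jet $\Phi_m$, transferring Assumption (A) from $\Phi$ to $\Phi_m$ via the Taylor formula for $F'_{y_k}$ (the paper's formula (\ref{e5})), controlling the zeroth-order term $F(z,\Phi_m)$ through the relation $F(z,\Phi)=0$, and finally dividing by $z^{\lambda_m+\lambda}$, with the linear part yielding $L(q^{\lambda_m}\sigma)u$ thanks to $\sigma^k(z^{\lambda_m}u)=z^{\lambda_m}(q^{\lambda_m}\sigma)^k u$. The only cosmetic difference is that you expand in a general $u$ and sort the terms by the order $|p|$, whereas the paper substitutes the tail $\psi$ directly; the estimates on the real parts of the exponents are the same in both arguments.
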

\begin{proof}
Since for each $k=0,\ldots,n$ the Taylor formula yields  
$$
\frac{\partial F}{\partial y_k}(z,\Phi)-\frac{\partial F}{\partial y_k}(z,\Phi_m)=z^{\lambda_m}\sum_{l=0}^n\frac{\partial^2 F} 
{\partial y_k\partial y_l}(z,\Phi_m)\psi_l+\ldots,
$$
the assumption $\hbox{Re}\,\lambda_m>\hbox{Re}\,\lambda$ made for $m$ entails that $F'_{y_k}(z,\Phi_m)$ is of the form
\begin{equation}\label{e5}
\frac{\partial F}{\partial y_k}(z,\Phi_m)=A_kz^{\lambda}+\widehat B_kz^{\hat\lambda_k}+\ldots, \qquad
\hbox{Re}\,\hat\lambda_k>\hbox{Re}\,\lambda,
\end{equation}
that is, the restriction of the polynomial $F'_{y_k}$ on the truncation $\Phi_m$ begins with the same term as its restriction on the whole $\Phi$ 
(if $A_k\ne0$) or with the term whose exponent has the real part greater than ${\rm Re}\,\lambda$ (if $A_k=0$). 

Again, applying the Taylor formula to the relation $F(z,\Phi)=0$ we arrive at
\begin{eqnarray}\label{e4}
0&=&F(z,\Phi_m+z^{\lambda_m}\Psi)=F(z,\Phi_m)+z^{\lambda_m}\sum_{k=0}^n\frac{\partial F}{\partial y_k} (z,\Phi_m)\psi_k+\\ \nonumber
& &+\frac12z^{2\lambda_m}\sum_{k,l=0}^n\frac{\partial^2 F}{\partial y_k\partial y_l}(z,\Phi_m)\psi_k\psi_l+\ldots.
\end{eqnarray}
In view of (\ref{e5}), (\ref{e4}) and the assumptions on $m$, we get that the monomials of $F(z,\Phi_m)$ all have the exponents 
whose real part is larger than $\hbox{Re}(\lambda_m+\lambda)$. At this point, one divides (\ref{e4}) by $z^{\lambda_m+\lambda}$ and obtains the equality
$$
L(q^{\lambda_m}\sigma)\psi-N(z,\psi,(q^{\lambda_m}\sigma)\psi,\ldots,(q^{\lambda_m}\sigma)^n\psi)=0,
$$
where $N(z,u_0,u_1,\ldots,u_n)$ is given by a finite linear combination of monomials of the form 
$$
z^{\alpha}u_0^{p_0}u_1^{p_1}\ldots u_n^{p_n},\qquad \alpha\in\C,\quad\hbox{Re}\,\alpha>0, \quad p_i\in\Z_+.
$$
Therefore, (\ref{e6}) transforms the equation (\ref{e1}) into (\ref{e7}), whose formal solution is given by $u=\psi$.
\end{proof}

The second lemma describes the structure of the set of the power exponents $\lambda_j$ of the generalized formal power series (\ref{formsol}) (see also 
Th. 3.7.4 in \cite{bcfm} for Hahn series with real power exponents).

\begin{lemma} \label{L2}
Under Assumption $(\rm B)$, there is $m\geqslant0$ such that all the numbers $\lambda_{m+j}-\lambda_m$, $j>0$, belong to a finitely generated additive semi-group $\Gamma$.
\end{lemma}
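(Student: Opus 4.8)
The plan is to choose the index $m$ large enough to apply Lemma \ref{L1} and then read off the constraints on the power exponents of $\psi$ directly from the transformed equation (\ref{e7}), using Assumption (B) to run an induction.

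First I would fix $m$ so that simultaneously $m\geqslant j_0$, ${\rm Re}\,\lambda_m>{\rm Re}\,\lambda$ and ${\rm Re}\,\lambda_{m+1}>{\rm Re}\,\lambda_m$. Such an $m$ exists: since the numbers ${\rm Re}\,\lambda_j$ are non-decreasing and tend to $+\infty$, the strict inequality ${\rm Re}\,\lambda_{m+1}>{\rm Re}\,\lambda_m$ holds for infinitely many indices, and among those one may pick $m$ as large as needed. Lemma \ref{L1} then applies and turns (\ref{e1}) into
$$
L(q^{\lambda_m}\sigma)u=M(z,u,\sigma u,\ldots,\sigma^nu),
$$
with formal solution $u=\psi=\sum_{j\geqslant1}c_{m+j}z^{\mu_j}$, where I write $\mu_j:=\lambda_{m+j}-\lambda_m$. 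By the choice of $m$ every $\mu_j$ has ${\rm Re}\,\mu_j>0$, and the sequence ${\rm Re}\,\mu_j$ is non-decreasing with ${\rm Re}\,\mu_j\to+\infty$. Let $\alpha_1,\ldots,\alpha_N$ be the finitely many exponents $\alpha$ occurring in the monomials of $M$; by Lemma \ref{L1} each has ${\rm Re}\,\alpha_s>0$. I claim that the additive semi-group $\Gamma$ generated by $\alpha_1,\ldots,\alpha_N$ contains all the $\mu_j$.

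Next I would substitute $\psi$ into the transformed equation and compare coefficients of $z^{\mu_j}$. On the left-hand side $(q^{\lambda_m}\sigma)z^{\mu_j}=q^{\lambda_{m+j}}z^{\mu_j}$, so the coefficient of $z^{\mu_j}$ equals $L(q^{\lambda_{m+j}})c_{m+j}$, which is non-zero because $c_{m+j}\in\C^*$ and, as $m+j\geqslant j_0$, Assumption (B) gives $L(q^{\lambda_{m+j}})\neq0$. Hence $z^{\mu_j}$ must actually occur on the right-hand side. Expanding a monomial $z^{\alpha_s}\psi^{p_0}(\sigma\psi)^{p_1}\cdots(\sigma^n\psi)^{p_n}$ of $M$ produces powers $z^{\alpha_s+\mu_{i_1}+\cdots+\mu_{i_P}}$ with $P=p_0+\cdots+p_n\geqslant0$ and $i_1,\ldots,i_P\geqslant1$, so for each $j$ there is at least one representation
$$
\mu_j=\alpha_s+\mu_{i_1}+\cdots+\mu_{i_P}.
$$
This is the step that uses the small-divisors-free Assumption (B): without it the exponent $\mu_j$ might be carried solely by the left-hand operator and need not be generated from the right.

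Finally I would close by strong induction on $j$. Since ${\rm Re}\,\alpha_s>0$ and ${\rm Re}\,\mu_{i_l}>0$, in the displayed representation each summand satisfies ${\rm Re}\,\mu_{i_l}<{\rm Re}\,\mu_j$, and as the real parts are non-decreasing in the index this forces $i_l<j$. Thus $\mu_1=\alpha_s\in\Gamma$ (no feedback terms are possible), and if $\mu_1,\ldots,\mu_{j-1}\in\Gamma$ then $\mu_j=\alpha_s+\mu_{i_1}+\cdots+\mu_{i_P}\in\Gamma$ as well, being a sum of the generator $\alpha_s$ and elements of $\Gamma$. Hence every $\lambda_{m+j}-\lambda_m=\mu_j$ lies in the finitely generated semi-group $\Gamma$. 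The main obstacle is really the bookkeeping that makes the induction well-founded, namely arranging ${\rm Re}\,\mu_j>0$ for all $j$ (whence the strict drop in real part at each feedback term) --- this is exactly why $m$ must be chosen with the strict increase ${\rm Re}\,\lambda_{m+1}>{\rm Re}\,\lambda_m$ --- together with the fact, supplied by Lemma \ref{L1}, that all exponents $\alpha_s$ appearing in $M$ have positive real part.
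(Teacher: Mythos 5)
Your proof is correct and follows essentially the same route as the paper: apply Lemma \ref{L1} with $m\geqslant j_0$, take $\Gamma$ to be generated by the exponents $\alpha$ occurring in $M$, use Assumption (B) to see that $L(q^{\lambda_{m+j}})c_{m+j}\neq0$ forces each exponent $\mu_j=\lambda_{m+j}-\lambda_m$ to appear on the right-hand side as $\alpha_s+\mu_{i_1}+\cdots+\mu_{i_P}$, and close by induction. Your write-up is in fact slightly more careful than the paper's, which compresses the induction into ``by similar reasoning''; your explicit verification that a suitable $m$ exists and that ${\rm Re}\,\mu_{i_l}<{\rm Re}\,\mu_j$ forces $i_l<j$ (making the induction well-founded) fills in exactly the details the paper leaves implicit.
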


\begin{proof} Apply Lemma \ref{L1} with $m\geqslant j_0$ and obtain the equation (\ref{e7}). Let $\Gamma$ be an additive semi-group generated by a (finite) 
set of power exponents $\alpha$ of the variable $z$ contained in the monomials of $M(z,u,\sigma u,\ldots,\sigma^nu)$. Denote by $\alpha_1,\ldots,\alpha_s$ 
the generators of this semi-group, that is,
$$
\Gamma=\Bigl\{m_1\alpha_1+\ldots+m_s\alpha_s\mid m_i\in\Z_+,\;\sum_{i=1}^sm_i>0\Bigr\}, \qquad {\rm Re}\,\alpha_i>0.
$$

Let us use the equality for the generalized formal power series $\psi=\sum_{j=1}^{\infty}c_{m+j}\,z^{\lambda_{m+j}-\lambda_m}$:
\begin{equation}\label{e3}
L(q^{\lambda_m}\sigma)\psi=M(z,\psi,\sigma\psi,\ldots,\sigma^n\psi).
\end{equation}
Since 
$$
L(q^{\lambda_m}\sigma)\psi=\sum_{j=1}^{\infty}L(q^{\lambda_{m+j}})c_{m+j}\,z^{\lambda_{m+j}-\lambda_m}, \qquad L(q^{\lambda_{m+j}})\ne0,
$$
the first term of the generalized formal power series on the left-hand side of (\ref{e3}) is 
$$
L(q^{\lambda_{m+1}})c_{m+1}\,z^{\lambda_{m+1}-\lambda_m}\ne0.
$$
On the other hand, the first term of the series on the right-hand side of (\ref{e3}) is some monomial $cz^{\alpha}$, since any monomial $cz^{\alpha}\psi^{p_0}(\sigma\psi)^{p_1}\ldots(\sigma^n\psi)^{p_n}$ with $\sum_{k=0}^np_k>0$ begins with 
a term whose power exponent has the real part greater than ${\rm Re}(\lambda_{m+1}-\lambda_m)$. Therefore,
$$
\lambda_{m+1}-\lambda_m=\alpha\in\Gamma.
$$ 
By similar reasoning, for each $j>1$ we have
$$
\lambda_{m+j}-\lambda_m=\tilde\alpha+k_1(\lambda_{m+1}-\lambda_m)+\ldots+k_{j-1}(\lambda_{m+j-1}-\lambda_m), \qquad \tilde\alpha\in\Gamma, \quad k_i\in\Z_+,
$$
which finishes the proof with the use of mathematical induction.
\end{proof}

Without the loss of generality, one may assume that the generators $\alpha_1,\ldots,\alpha_s$ of the semi-group $\Gamma$ 
are {\it linearly independent over $\Z$} (see Lemma 3 in \cite{gg0}). This important property of the generators will indeed be assumed and 
used further.

\section{A result on convergence}

In this section we present a sufficient condition of the convergence of the generalized formal power series solution (\ref{formsol}) of (\ref{e1}), 
for which Assumption (B) holds. As we have seen, such a solution can be represented in the form
$$
\varphi=\varphi_m+z^{\lambda_m}\psi,
$$  
and the power exponents of the generalized formal power series $\psi$ belong to a finitely generated additive semi-group
$\Gamma$.  Moreover, the generators of $\Gamma$ are linearly independent over $\Z$.

\begin{theo} 
Let all the generators $\alpha_1,\ldots,\alpha_s$ of the semi-group $\Gamma$ lie in the {\rm open} half-plane in $\C$ on one side of the line $\cal L$ 
with the slope $\ln |q|/\arg q$ passing through the origin. Let, additionally, in the case where they lie {\rm above} $\cal L$ the coefficient $A_0$
of the polynomial $L$ be non-zero, or in the case where they lie {\rm under} $\cal L$ the coefficient $A_n$ of $L$ be non-zero. Then the series $\psi$, 
and therefore the series $\varphi$, converges in sectors of small radius with the vertex at the origin and of opening less than $2\pi$.
\end{theo}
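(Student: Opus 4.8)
The plan is to reduce the convergence question to a standard majorant argument for a multivariate power series, with the whole difficulty concentrated in showing that the divisors $L(q^{\lambda_{m+j}})$ stay bounded away from zero and, more precisely, that they dominate the amplification produced by the shift operators $\sigma^i$ occurring in $M$. First I would record the geometric meaning of the line $\cal L$. Writing $\lambda=a+\I b$ one has $|q^\lambda|=e^{{\rm Re}(\lambda\ln q)}=e^{a\ln|q|-b\arg q}$, so $\cal L=\{\lambda:{\rm Re}(\lambda\ln q)=0\}=\{\lambda:|q^\lambda|=1\}$, the points above it being exactly those with $|q^\lambda|<1$ and those below it those with $|q^\lambda|>1$. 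Since every $\gamma\in\Gamma$ is a $\Z_+$-combination $\sum_i m_i\alpha_i$ of the generators, $|q^\gamma|=\prod_i|q^{\alpha_i}|^{m_i}$; hence in the ``above'' case all $|q^{\alpha_i}|<1$, so $|q^\gamma|\le(\max_i|q^{\alpha_i}|)^{\sum_i m_i}\to0$ as $\sum_i m_i\to\infty$, while in the ``under'' case $|q^\gamma|\to\infty$. As only finitely many $\gamma\in\Gamma$ have $\sum_i m_i$ bounded, all but finitely many of the $q^{\lambda_{m+j}}=q^{\lambda_m}q^{\gamma_j}$ accumulate at $0$ (resp.\ at $\infty$).

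Next I would extract the no-small-divisors estimate. In the ``above'' case $L(q^{\lambda_{m+j}})\to L(0)=A_0$, so if $A_0\ne0$ there is $j_1$ with $|L(q^{\lambda_{m+j}})|\ge|A_0|/2$ for $j\ge j_1$; in the ``under'' case $L(q^{\lambda_{m+j}})\sim A_n(q^{\lambda_{m+j}})^n$, so if $A_n\ne0$ the divisor even grows. For the finitely many remaining indices Assumption (B) (with $m\ge j_0$) gives $L(q^{\lambda_{m+j}})\ne0$, so the minimum over the finite exceptional set yields a uniform bound $|L(q^{\lambda_{m+j}})|\ge\delta>0$ for all $j\ge1$. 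The sharper statement I actually need is a bounded-multiplier estimate: for each $i=0,\ldots,n$,
$$
\sup_{\gamma\in\Gamma}\frac{|q^{i\gamma}|}{|L(q^{\lambda_m+\gamma})|}<\infty .
$$
In the ``above'' case this is immediate, since $|q^{i\gamma}|=|q^\gamma|^i\le1$ while the denominator is bounded below by $\delta$; in the ``under'' case it is the crucial point, holding precisely because the leading term $A_n(q^{\lambda_m+\gamma})^n$ has modulus $\asymp|q^\gamma|^n\ge|q^{i\gamma}|$, so the growth of $\sigma^n$ is exactly compensated by $L$. This is why the hypothesis pairs the ``above'' side with $A_0\ne0$ and the ``under'' side with $A_n\ne0$, and it is the step I expect to be the main obstacle.

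With these estimates the remainder follows the scheme of the differential case \cite{gg0},\cite{gg}. Since $\alpha_1,\ldots,\alpha_s$ are linearly independent over $\Z$, the map ${\bf m}\mapsto\sum_i m_i\alpha_i$ is injective, so setting $t_i=z^{\alpha_i}$ identifies $\psi$ with a genuine formal power series $\tilde\psi\in\C[[t_1,\ldots,t_s]]$ without constant term and $\sigma$ with the scaling $(Q\tilde u)(t)=\tilde u(q^{\alpha_1}t_1,\ldots,q^{\alpha_s}t_s)$. Equation (\ref{e7}) becomes $L(q^{\lambda_m}Q)\tilde\psi=\tilde M(t,\tilde\psi,Q\tilde\psi,\ldots,Q^n\tilde\psi)$, where $L(q^{\lambda_m}Q)$ acts diagonally on $t^{\bf m}$ by multiplication by $L(q^{\lambda_m+\gamma})$. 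I would then rewrite this as the fixed-point equation $\tilde\psi=L(q^{\lambda_m}Q)^{-1}\tilde M(t,\tilde\psi,\ldots,Q^n\tilde\psi)$ and build a majorant: replacing every coefficient of $\tilde M$ by its modulus and bounding each composite operator $L(q^{\lambda_m}Q)^{-1}Q^i$ by the finite constant above, one obtains an auxiliary equation with nonnegative coefficients whose coefficientwise solution dominates $(|c_{\bf m}|)$. As $\tilde M$ is polynomial in its $u$-arguments with monomials $t^{\bf m}$ coming from $z^\alpha$, ${\rm Re}\,\alpha>0$, the majorant equation reduces (e.g.\ on the diagonal $t_1=\cdots=t_s$) to a single analytic implicit-function problem, solvable with an analytic solution having positive Taylor coefficients; hence $\tilde\psi$ converges on a polydisc $\{|t_i|<\rho\}$.

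Finally I would translate convergence back. For $z=re^{\I\theta}$ one has $|z^{\alpha_i}|=r^{{\rm Re}\,\alpha_i}e^{-{\rm Im}(\alpha_i)\theta}$, which is $<\rho$ for all $i$ whenever $r$ is small and $\theta$ ranges over an interval of length less than $2\pi$ (here ${\rm Re}\,\alpha_i>0$ is exactly what makes this work). Therefore $\psi=\tilde\psi(z^{\alpha_1},\ldots,z^{\alpha_s})$, and consequently $\varphi=\varphi_m+z^{\lambda_m}\psi$, converges in sectors of small radius with vertex at the origin and of opening less than $2\pi$, as claimed.
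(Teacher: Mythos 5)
Your proposal is correct and follows essentially the same route as the paper's proof: the substitution $t_i=z^{\alpha_i}$ identifying $\psi$ with a multivariate Taylor series (the paper's isomorphism $\iota$ onto ${\C}[[z_1,z_2]]_*$, with $\sigma$ acting diagonally), a uniform positive lower bound on the divisors $L(q^{\lambda_m+\gamma})$ coming from the half-plane hypothesis ($q^{\gamma}\to0$, $L(0)=A_0\ne0$) together with Assumption (B), a majorant-series argument solved by the implicit function theorem, and the final transfer to sectors via $|z^{\alpha_i}|<\rho$ for $z$ in a sector of opening less than $2\pi$. The only differences are presentational: you make explicit the ``under ${\cal L}$'' case with the compensation estimate $\sup_{\gamma}|q^{n\gamma}|/|L(q^{\lambda_m+\gamma})|<\infty$ (which is indeed the right aggregate bound, since the total $q$-factor in each term of the coefficient recursion is $q^{\sum_j j\gamma_j}$ with $\sum_j j\gamma_j$ dominated by $n\gamma$), where the paper only says the second case is treated ``similarly,'' and you reduce the majorant equation to one variable on the diagonal instead of applying the multivariate implicit function theorem directly.
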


\noindent{\bf Remark 2.} The case $\ln |q|/\arg q=0/0$ is excluded, since $q\ne1$. The cases $\ln |q|/\arg q=-\infty$ and $\ln |q|/\arg q=+\infty$ 
correspond to a {\it positive real} $q$ ($0<q<1$ and $q>1$, respectively). In these cases the line $\cal L$ becomes the {\it imaginary axis} of the 
complex plane. Therefore, as the generators of the semi-group $\Gamma$ lie in the right open half-plane of $\C$, they are automatically placed 
''above $\cal L$'' for $0<q<1$ and ''under $\cal L$'' for $q>1$.
\medskip

\noindent{\bf Remark 3.} When all the generators of the semi-group $\Gamma$ are {\it real} (and hence lie on the positive real axis) they are placed either
\begin{itemize}
\item above $\cal L$ (if $\ln |q|/\arg q<0$ and hence $|q|<1$) or 
\item under $\cal L$ (if $\ln |q|/\arg q>0$ and hence $|q|>1$) or 
\item on $\cal L$ (if $\ln |q|/\arg q=0$ and hence $|q|=1$). 
\end{itemize}
Therefore, in the first two cases Theorem 1 becomes Theorem 6.1.1 from \cite{bcfm} or the particular case of convergence from Main Theorem in \cite{zh} 
(more precisely, a generalization of those theorems to the case of real generators, as they deal with formal {\it Taylor series} solutions of analytic $q$-difference equations). The third case, that of $|q|=1$, is exceptional and requires additional studying since the {\it small divisors} phenomenon 
arises (see \cite{b1} and \cite{dv}). On the other hand, this case has no specific feature if the generators of $\Gamma$ do not belong to $\R$.
Its analogue in the case of {\it complex} generators is a situation where they lie on the line $\cal L$ or on opposite sides of this line. We don't consider
here such a situation leading to the small divisors phenomenon and deserving a further study.      
\medskip

\noindent{\bf Remark 4.} The assumption of the theorem ``lie on one side of the line $\cal L$'' implies that all the numbers 
${\rm Re}\,\alpha_i\ln|q|-{\rm Im}\,\alpha_i\arg q$ are 
\begin{itemize}
\item negative, if the generators $\alpha_1,\ldots,\alpha_s$ lie {\it above} the line $\cal L$, and hence 
$$
k=\max\limits_{i=1,\ldots,s}({\rm Re}\,\alpha_i\ln|q|-{\rm Im}\,\alpha_i\arg q)<0;
$$ 
\item positive, if the generators $\alpha_1,\ldots,\alpha_s$ lie {\it under} the line $\cal L$, and hence 
$$
\tilde k=\min\limits_{i=1,\ldots,s}({\rm Re}\,\alpha_i\ln|q|-{\rm Im}\,\alpha_i\arg q)>0. 
$$
\end{itemize}
Therefore, when $\sum_{i=1}^sm_i$ tends to infinity one has
$$
\Bigl|q^{\sum_{i=1}^sm_i\alpha_i}\Bigr|=e^{\sum_{i=1}^sm_i({\rm Re}\,\alpha_i\ln|q|-{\rm Im}\,\alpha_i\arg q)}\leqslant 
e^{k\sum_{i=1}^sm_i}\rightarrow0 
$$
in the first case, and
$$
\Bigl|q^{\sum_{i=1}^sm_i\alpha_i}\Bigr|=e^{\sum_{i=1}^sm_i({\rm Re}\,\alpha_i\ln|q|-{\rm Im}\,\alpha_i\arg q)}\geqslant 
e^{\tilde k\sum_{i=1}^sm_i}\rightarrow\infty 
$$
in the second case.

\begin{proof} We consider the case where the generators of $\Gamma$ lie {\it above} the line $\cal L$ and $A_0\ne0$. The second case can be studied 
similarly. Further, for the simplicity of exposition, we will assume that $\Gamma$ is generated by two numbers $\alpha_1$, $\alpha_2$ linearly independent 
over $\Z$, 
$$
\Gamma=\Bigl\{m_1\alpha_1+m_2\alpha_2\mid m_1, m_2\in\Z_+,\;m_1+m_2>0\Bigr\}, \qquad {\rm Re}\,\alpha_1, {\rm Re}\,\alpha_2>0.
$$
In the case of an arbitrary number $s$ of generators all constructions are analogous, only multivariate Taylor series in $s$ rather than in two variables 
are involved.

We should establish the convergence of the generalized power series
$\psi=\sum_{j=1}^{\infty}c_{m+j}\,z^{\lambda_{m+j}-\lambda_m}$ which satisfies the equality (\ref{e3}) and whose
exponents $\lambda_{m+j}-\lambda_m$, $j>0$, are of the form 
$$
\lambda_{m+j}-\lambda_m=m_1\alpha_1+m_2\alpha_2, \qquad (m_1,m_2)\in Z,
$$
$Z$ being a uniquely determined subset of ${\mathbb Z}_+^2\setminus\{0\}$ such that the map $j\mapsto(m_1,m_2)$ is a bijection from $\N$ to $Z$. 
In other words,
$$
\psi=\sum_{(m_1,m_2)\in Z}c_{m_1,m_2}z^{m_1\alpha_1+m_2\alpha_2}=\sum_{(m_1,m_2)\in{\mathbb Z}_+^2\setminus\{0\}}c_{m_1,m_2}z^{m_1\alpha_1+m_2\alpha_2}
$$
(in the last series one puts $c_{m_1,m_2}=c_{m+j}$ for $(m_1,m_2)=(m_1(j),m_2(j))\in Z$, and $c_{m_1,m_2}=0$ for $(m_1,m_2)\not\in Z$). We will represent $\psi$ by a bivariate formal Taylor series and prove that the latter has a non-empty bidisc of convergence, whence the convergence of $\psi$ itself and 
Theorem 1 will follow. 

The linear independence of the generators $\alpha_1$, $\alpha_2$ over $\Z$ allows to define a bijective linear map 
$\iota:{\C}[[z^{\Gamma}]]\rightarrow{\C}[[z_1,z_2]]_*$ from the $\C$-algebra of generalized formal power series with exponents in $\Gamma$ to the 
$\C$-algebra of formal Taylor series in two variables without a constant term,
$$
\iota: \sum_{\gamma=m_1\alpha_1+m_2\alpha_2\in\Gamma}a_{\gamma}z^{\gamma}\mapsto
\sum_{(m_1,m_2)\in{\mathbb Z}_+^2\setminus\{0\}}a_{m_1,m_2}z_1^{m_1}z_2^{m_2}, \qquad a_{m_1,m_2}=a_{m_1\alpha_1+m_2\alpha_2}.
$$
Moreover, this map is an isomorphism of algebras, since
$$
\iota(\eta_1\eta_2)=\iota(\eta_1)\iota(\eta_2) \qquad\forall\eta_1,\eta_2\in{\C}[[z^{\Gamma}]].
$$
The $q$-difference operator $\sigma:{\C}[[z^{\Gamma}]]\rightarrow{\C}[[z^{\Gamma}]]$ naturally induces 
a linear automorphism $\tilde\sigma$ of ${\C}[[z_1,z_2]]_*$,
$$
\tilde\sigma: \sum_{(m_1,m_2)\in{\mathbb Z}_+^2\setminus\{0\}}a_{m_1,m_2}z_1^{m_1}z_2^{m_2}\mapsto
\sum_{(m_1,m_2)\in{\mathbb Z}_+^2\setminus\{0\}}q^{m_1\alpha_1+m_2\alpha_2}\,a_{m_1,m_2}z_1^{m_1}z_2^{m_2},
$$
which clearly satisfies $\tilde\sigma\circ\iota=\iota\circ\sigma$, so that the following commutative diagram holds:
$$
\begin{array}{ccc}
{\C}[[z^{\Gamma}]] & \stackrel{\sigma}{\longrightarrow} & {\C}[[z^{\Gamma}]] \\
\downarrow\lefteqn{\iota} & & \downarrow\lefteqn{\iota} \\
{\C}[[z_1,z_2]]_* & \stackrel{\tilde\sigma}{\longrightarrow} & {\C}[[z_1,z_2]]_*
\end{array}
$$

The reasoning above gives us the representation 
$$
\tilde\psi=\iota(\psi)=\sum_{(m_1,m_2)\in{\mathbb Z}_+^2\setminus\{0\}}c_{m_1,m_2}z_1^{m_1}z_2^{m_2}
$$ 
of the formal solution $\psi$ of \eqref{e7} by a bivariate formal Taylor series. Applying the map $\iota$ to the both sides of the equality 
\eqref{e3} we obtain the following relation for $\tilde\psi$:
\begin{equation}\label{psi0relation}
L(q^{\lambda_m}\tilde\sigma)\tilde\psi=\widetilde M(z_1,z_2,\tilde\psi,\tilde\sigma\tilde\psi,\ldots,
\tilde\sigma^n\tilde\psi),
\end{equation}
where $\widetilde M(z_1,z_2,u_0,\ldots,u_n)$ is a polynomial such that 
$$
\widetilde M(0,0,u_0,\ldots,u_n)\equiv0.
$$
Now using the relation (\ref{psi0relation}) we will prove that $\tilde\psi$ has a non-empty bidisc of convergence.

Let the polynomial $\widetilde M$ be written in the form
$$
\widetilde M(z_1,z_2,u_0,\ldots,u_n)=\sum_{k_1,k_2,{\bf p}}A_{k_1,k_2,{\bf p}}\,z_1^{k_1}z_2^{k_2}
u_0^{p_0}\ldots u_n^{p_n}, \qquad {\bf p}=(p_0,\ldots,p_n).
$$
To prove the convergence of $\tilde\psi\in{\mathbb C}[[z_1,z_2]]_*$ in some neighbourhood of the origin, we construct an equation
\begin{eqnarray}\label{majorant}
\nu\,W=\sum_{k_1,k_2,{\bf p}}|A_{k_1,k_2,{\bf p}}|\,z_1^{k_1}z_2^{k_2}W^{p_0}\ldots W^{p_n},
\end{eqnarray}
whose right-hand side is obtained from the polynomial $\widetilde M$ by the change of its coefficients $A_{k_1,k_2,{\bf p}}$ to their absolute values 
and all the $u_j$'s to the one variable $W$. The number $\nu$ is defined by the formula
$$
\nu=\inf_{j\geqslant1}|L(q^{\lambda_{j+m}})|=\inf_{(m_1,m_2)\in Z}|L(q^{\lambda_m+m_1\alpha_1+m_2\alpha_2})|,
$$
which is strictly positive, since $q^{m_1\alpha_1+m_2\alpha_2}\rightarrow0$ as $m_1+m_2\rightarrow\infty$ (see Remark 4) whereas $L(0)=A_0\ne0$.

The equation (\ref{majorant}) possesses a unique solution $W(z_1,z_2)$ holomorphic near the origin,
$$
W=\sum\limits_{(m_1,m_2)\in{\mathbb Z}_+^2\setminus\{0\}}C_{m_1,m_2}\,z_1^{m_1}z_2^{m_2},
$$
which satisfies the condition $W(0,0)=0$. This follows from the implicit function theorem. We prove that the power series 
$W$ is majorant for $\tilde\psi$:
$$
C_{m_1,m_2}\in{\mathbb R}_+, \qquad |c_{m_1,m_2}|\leqslant C_{m_1,m_2}\quad\forall (m_1,m_2)\in{\mathbb Z}_+^2 \setminus\{0\},
$$
which will imply the convergence of $\tilde\psi$ in a neighbourhood of the origin in $\C^2$.

First we use the equality (\ref{psi0relation}) to obtain recursive expressions for the coefficients $c_{m_1,m_2}$. Denote by $\phi$ the formal power 
series from the right-hand side of this equality,
$$
\phi=\sum_{k_1,k_2,{\bf p}}A_{k_1,k_2,{\bf p}}\,z_1^{k_1}z_2^{k_2}\,\tilde\psi^{p_0}(\tilde\sigma\tilde\psi)^{p_1}\ldots (\tilde\sigma^n\tilde\psi)^{p_n}\in{\mathbb C}[[z_1,z_2]]_*.
$$
Then (\ref{psi0relation}) implies
\begin{eqnarray}\label{clm}
L(q^{\lambda_m+m_1\alpha_1+m_2\alpha_2})\,c_{m_1,m_2}=\Bigl.\frac{\partial_1^{m_1}\partial_2^{m_2}\phi}
{m_1!\,m_2!}\Bigr|_{z_1=z_2=0},
\end{eqnarray}
where $\partial_1$, $\partial_2$ are the partial derivatives with respect to $z_1$, $z_2$. For the right-hand side of (\ref{clm}) we have
\begin{eqnarray}\label{form3}
\Bigl.\frac{\partial_1^{m_1}\partial_2^{m_2}\phi}{m_1!\,m_2!}\Bigr|_{z_1={z_2}=0}&=&
\sum_{k_1,k_2,{\bf p}}A_{k_1,k_2,{\bf p}}\times \nonumber \\
& &\times\sum_{\begin{array}{c}\scriptstyle l^{(1)}_0+\ldots+l^{(1)}_n=m_1-k_1\\ \scriptstyle l^{(2)}_0+\ldots+l^{(2)}_n= m_2-k_2\end{array}}
\Bigl.\frac{\partial_1^{l^{(1)}_0}\partial_2^{l^{(2)}_0}\tilde\psi^{p_0}}{l^{(1)}_0!\,l^{(2)}_0!}\,\;\ldots\;
\frac{\partial_1^{l^{(1)}_n}\partial_2^{l^{(2)}_n}(\tilde\sigma^n\tilde\psi)^{p_n}}{l^{(1)}_n!\,l^{(2)}_n!}\Bigr|_{z_1={z_2}=0}
\end{eqnarray}
(the internal sum is equal to zero, if $k_1>m_1$ or $k_2>m_2$), where
\begin{equation}\label{form4}
\Bigl.\frac{\partial_1^{l^{(1)}_j}\partial_2^{l^{(2)}_j}(\tilde\sigma^j\tilde\psi)^{p_j}}{l^{(1)}_j!\,l^{(2)}_j!}\Bigr|_{z_1={z_2}=0}=
q^{j\bigl(l^{(1)}_j\alpha_1+l^{(2)}_j\alpha_2\bigr)}\sum_{\begin{array}{c} \scriptstyle \lambda_1+\ldots+\lambda_{p_j}=l^{(1)}_j\\ \scriptstyle \mu_1+\ldots+\mu_{p_j}=l^{(2)}_j\end{array}}c_{\lambda_1,\mu_1}\ldots c_{\lambda_{p_j},\mu_{p_j}}.
\end{equation}
The summations in the formulae (\ref{form3}), (\ref{form4}) go over {\it non-negative} integers $l^{(1)}_j$, $l^{(2)}_j$ and $\lambda_i$, $\mu_i$. 
These formulae are similar to the formulae (18), (19) in \cite{gg0} for the differential case and follow from the Leibniz rule of
differentiation (for more technical details see \cite{gg0}). Since $\widetilde M(0,0,u_0,\ldots,u_n)\equiv0$, for each
coefficient $A_{k_1,k_2,{\bf p}}$ in (\ref{form3}) at least one of the indices $k_1$, $k_2$ is non-zero. Therefore, for
each fixed triple $(k_1,k_2,{\bf p})$ either all the summation indices $l^{(1)}_j$ are less than $m_1$ or
all the summation indices $l^{(2)}_j$ are less than $m_2$. Hence, the formula (\ref{clm}) can be written in the form
$$
L(q^{\lambda_m+m_1\alpha_1+m_2\alpha_2})\,c_{m_1,m_2}=p_{m_1,m_2}(\{A_{k_1,k_2,{\bf p}}\},\{c_{\lambda,\mu}\}),
$$
where $p_{m_1,m_2}$ is a polynomial of the variables $\{A_{k_1,k_2,{\bf p}}\}$, $\{c_{\lambda,\mu}\}$, with 
$\lambda\leqslant m_1$, $\mu\leqslant m_2$, and $\lambda+\mu<m_1+m_2$ (which is determined by the formulae (\ref{form3}), (\ref{form4})).

Now we similarly use the equality (\ref{majorant}) to obtain recursive expressions for the coefficients $C_{m_1,m_2}$ of the series $W=\sum_{(m_1,m_2)\in{\mathbb Z}_+^2\setminus\{0\}}C_{m_1,m_2}z_1^{m_1}z_2^{m_2}$ convergent near the origin in $\C^2$. Denote by $\Theta$ the power 
series from the right-hand side of this equality, 
$$
\Theta=\sum_{k_1,k_2,{\bf p}}|A_{k_1,k_2,{\bf p}}|\,z_1^{k_1}z_2^{k_2}\,W^{p_0}\ldots W^{p_n}
\in{\mathbb C}\{z_1,z_2\}.
$$
Then (\ref{majorant}) implies
\begin{eqnarray}\label{Clm}
\nu\,C_{m_1,m_2}=\Bigl.\frac{\partial_1^{m_1}\partial_2^{m_2}\Theta}{m_1!\,m_2!}\Bigr|_{z_1={z_2}=0},
\end{eqnarray}
and by analogy with $\phi$ one has
\begin{eqnarray}\label{form5}
\Bigl.\frac{\partial_1^{m_1}\partial_2^{m_2}\Theta}{m_1!\,m_2!}\Bigr|_{z_1={z_2}=0}&=&
\sum_{k_1,k_2,{\bf p}}|A_{k_1,k_2,{\bf p}}|\times \nonumber \\
& &\times\sum_{\begin{array}{c}\scriptstyle l^{(1)}_0+\ldots+l^{(1)}_n=m_1-k_1\\ \scriptstyle l^{(2)}_0+\ldots+l^{(2)}_n= m_2-k_2\end{array}}
\Bigl.\frac{\partial_1^{l^{(1)}_0}\partial_2^{l^{(2)}_0}W^{p_0}}{l^{(1)}_0!\,l^{(2)}_0!}\,\;\ldots\;
\frac{\partial_1^{l^{(1)}_n}\partial_2^{l^{(2)}_n}W^{p_n}}{l^{(1)}_n!\,l^{(2)}_n!}\Bigr|_{z_1={z_2}=0},
\end{eqnarray}
where
\begin{eqnarray}\label{form6}
\Bigl.\frac{\partial_1^{l^{(1)}_j}\partial_2^{l^{(2)}_j}W^{p_j}}{l^{(1)}_j!\,l^{(2)}_j!}\Bigr|_{z_1={z_2}=0}= 
\sum_{\begin{array}{c} \scriptstyle \lambda_1+\ldots+\lambda_{p_j}=l^{(1)}_j\\ \scriptstyle \mu_1+\ldots+\mu_{p_j}=l^{(2)}_j\end{array}}C_{\lambda_1,\mu_1}\ldots C_{\lambda_{p_j},\mu_{p_j}}.
\end{eqnarray}
Thus, the formula (\ref{Clm}) can be written in the form
\begin{eqnarray}\label{Plm}
\nu\,C_{m_1,m_2}=P_{m_1,m_2}(\{|A_{k_1,k_2,{\bf p}}|\},\{C_{\lambda,\mu}\}),
\end{eqnarray}
where $P_{m_1,m_2}$ is a polynomial of the variables $\{|A_{k_1,k_2,{\bf p}}|\}$, $\{C_{\lambda,\mu}\}$, with $\lambda\leqslant m_1$, 
$\mu\leqslant m_2$, and $\lambda+\mu<m_1+m_2$ (which is determined by the formulae (\ref{form5}), (\ref{form6}) and has real positive coefficients). 
Since for $(m_1,m_2)$ equal to $(1,0)$ and $(0,1)$ we have, respectively,
$$
\nu\,C_{1,0}=\partial_1\Theta(0,0)=|A_{1,0,{\bf 0}}|, \qquad \nu\,C_{0,1} =\partial_2\Theta(0,0)=|A_{0,1,{\bf 0}}|,
$$
all the coefficients $C_{m_1,m_2}$ are real non-negative numbers.

Finally we come to a concluding part of the proof, the estimates
$$
|c_{m_1,m_2}|\leqslant C_{m_1,m_2}\quad\forall (m_1,m_2)\in{\mathbb Z}_+^2\setminus\{0\}.
$$
We prove them by the induction with respect to the sum $m_1+m_2$ of the indices.

For $m_1+m_2=1$ according to (\ref{clm}) we have
$$
L(q^{\lambda_m+\alpha_1})\,c_{1,0}=\partial_1\phi(0,0)=A_{1,0,{\bf 0}}, \qquad L(q^{\lambda_m+\alpha_2})\,c_{0,1}=\partial_2\phi(0,0)=A_{0,1,{\bf 0}},
$$
hence 
\begin{eqnarray*}
|c_{1,0}|=\frac{|A_{1,0,{\bf 0}}|}{|L(q^{\lambda_m+\alpha_1})|}=\frac{\nu\,C_{1,0}}{|L(q^{\lambda_m+\alpha_1})|} \leqslant C_{1,0}, \\
|c_{0,1}|=\frac{|A_{0,1,{\bf 0}}|}{|L(q^{\lambda_m+\alpha_2})|}=\frac{\nu\,C_{0,1}}{|L(q^{\lambda_m+\alpha_2})|} \leqslant C_{0,1}
\end{eqnarray*}
(for $(1,0)\in Z$ we have $|L(q^{\lambda_m+\alpha_1})|\geqslant\nu$, whereas $c_{1,0}=0$ for $(1,0)\not\in Z$; the analogous situation remains valid 
for the index $(0,1)$).

Further, by the definition of the polynomials $p_{m_1,m_2}$ and $P_{m_1,m_2}$, for any $(m_1,m_2)\in Z$ we have
$$
\bigl|p_{m_1,m_2}(\{A_{k_1,k_2,{\bf p}}\},\{c_{\lambda,\mu}\})\bigr|\leqslant P_{m_1,m_2}(\{|A_{k_1,k_2,{\bf p}}|\},\{|c_{\lambda,\mu}|\}),
$$
since the estimate $|q^{l_1\alpha_1+l_2\alpha_2}|\leqslant1$ holds for any non-negative integers $l_1,l_2$ (see Remark 4). Therefore, the inductive 
assumption (the second inequality below) implies
\begin{eqnarray*}
|L(q^{\lambda_m+m_1\alpha_1+m_2\alpha_2})|\,|c_{m_1,m_2}|&=&\bigl|p_{m_1,m_2}(\{A_{k_1,k_2,{\bf p}}\},\{c_{\lambda,\mu}\})\bigr|\leqslant \\ & \leqslant & P_{m_1,m_2}(\{|A_{k_1,k_2,{\bf p}}|\},\{|c_{\lambda,\mu}|\})
\leqslant \\ & \leqslant & P_{m_1,m_2}(\{|A_{k_1,k_2,{\bf p}}|\},\{C_{\lambda,\mu}\})=\nu\,C_{m_1,m_2},
\end{eqnarray*}
whence the required estimate follows:
$$
|c_{m_1,m_2}|\leqslant\frac{\nu\,C_{m_1,m_2}}{|L(q^{\lambda_m+m_1\alpha_1+m_2\alpha_2})|}\leqslant C_{m_1,m_2}
$$
(again, for $(m_1,m_2)\in Z$ we have $|L(q^{\lambda_m+m_1\alpha_1+m_2\alpha_2})|\geqslant\nu$, whereas $c_{m_1,m_2}=0$ for $(m_1,m_2)\not\in Z$).

Now it remains to notice that for any sector $S\subset\C$ with the vertex at the origin and of opening less than $2\pi$, the terms of a generalized power 
series are holomorphic single-valued functions in $S$, and to pass from the convergence of $\tilde\psi=\sum_{(m_1,m_2)\in{\mathbb Z}_+^2\setminus\{0\}} c_{m_1,m_2}z_1^{m_1}z_2^{m_2}$ to the convergence of $\psi=\sum_{j=1}^{\infty}c_{m+j}z^{\lambda_{m+j}-\lambda_m}$ and $\varphi=\sum_{j=0}^{\infty}c_jz^{\lambda_j}$. Indeed, let the power series $\tilde\psi$ converge in a bidisc $\{|z_1|<r,\,|z_2|<r\}$. 
If $z\in S$ is small enough for the inequalities
$$
|z^{\alpha_1}|=|z|^{{\rm Re}\,\alpha_1}\,e^{-{\rm Im}\,\alpha_1\arg z}<r, \qquad 
|z^{\alpha_2}|=|z|^{{\rm Re}\,\alpha_2}\,e^{-{\rm Im}\,\alpha_2\arg z}<r
$$
to be held (recall that ${\rm Re}\,\alpha_{1,2}>0$), then
$$
|c_{m+j}z^{\lambda_{m+j}-\lambda_m}|=|c_{m_1,m_2}|\cdot|z^{\alpha_1}|^{m_1}\cdot|z^{\alpha_2}|^{m_2}<|c_{m_1,m_2}|\,r^{m_1}\,r^{m_2}, 
$$
hence the series $\psi=\sum_{j=1}^{\infty}c_{m+j}z^{\lambda_{m+j}-\lambda_m}$ and $\varphi=\sum_{j=0}^{\infty}c_jz^{\lambda_j}$ converge uniformly 
in $S$ for sufficiently small $|z|$. 

\end{proof}

\section{Concluding examples}

We recall that one of the $q$-analogues of Euler equation, 
$$
\sigma y-zy-z=0,
$$
possesses a formal Taylor series solution $\varphi=\sum_{m=1}^{\infty}q^{-m(m+1)/2}\,z^m$, and with null radius of convergence, if $|q|<1$. Let us
give an example of an algebraic $q$-difference equation and its divergent generalized power series solution whose power exponents form a semi-group
generated by {\it two} numbers.    
\medskip

{\bf Example 1.} Consider the algebraic $q$-difference equation 
\begin{equation}\label{ex1}
\sigma^2y-2^{-\sqrt2}\,\sigma y+y^2-5z=0,\qquad q=\frac12,
\end{equation}
which has a generalized formal power series solution 
\begin{equation}\label{ex2}
\varphi=\sum\limits_{m_1+m_2\geqslant 0}c_{m_1,m_2}\,z^{1+m_1+m_2(\sqrt{2}-1)},\qquad c_{m_1,m_2}\in\C,
\end{equation}
where $c_{01}$ is an arbitrary constant and the other coefficients are determined uniquely. This formal solution does not satisfy the assumptions of 
Theorem 1, since the exponents $1, \sqrt2-1$ lie above the line $\cal L$ but the polynomial $L(\xi)=\xi^2-2^{-\sqrt2}\xi$ vanishes at $\xi=0$. Let us 
prove that the series \eqref{ex2} has a zero radius of convergence. Substituting it in the equation \eqref{ex1} we obtain the equality
\begin{equation}\label{ex3}
\sum\limits_{m_1+m_2\geqslant0}c_{m_1,m_2}\Bigl(q^{2(1+m_1+m_2(\sqrt2-1))}-2^{-\sqrt2}\,q^{1+m_1+m_2(\sqrt{2}-1)}\Bigr)z^{1+m_1+m_2(\sqrt2-1)}=
\end{equation}
$$
=5z-\Bigl(\sum\limits_{m_1+m_2\geqslant 0}c_{m_1,m_2}z^{1+m_1+m_2(\sqrt2-1)}\Bigr)^2.
$$
Put 
$$z_1=z,\qquad z_2=z^{\sqrt{2}-1},
$$
then the equality \eqref{ex3} can be rewritten in the form
\begin{equation}\label{ex4}
\sum\limits_{m_1+m_2\geqslant0}c_{m_1,m_2}\Bigl(2^{-\sqrt2}\,q^{1+m_1+m_2(\sqrt2-1)}-q^{2(1+m_1+m_2(\sqrt2-1))}\Bigr)z_1^{m_1}z_2^{m_2}=
\end{equation}
$$
=-5+z_1\Bigl(\sum\limits_{m_1+m_2\geqslant0}c_{m_1,m_2}z_1^{m_1}z_2^{m_2}\Bigr)^2,
$$
whereas 
\begin{equation}\label{ex7}
\varphi=\sum\limits_{m_1+m_2\geqslant0}c_{m_1,m_2}z_1^{m_1+1}z_2^{m_2}.
\end{equation}
Then it follows from \eqref{ex4} that 
$$
c_{00}=\frac5{q^2-2^{-\sqrt{2}}\,q}=\frac{10}{2^{-1}-2^{-\sqrt2}}>80 
$$
and
\begin{equation*}
\sum\limits_{m_1+m_2\geqslant1}c_{m_1,m_2}\Bigl(2^{-\sqrt2}\,q^{1+m_1+m_2(\sqrt2-1)}-q^{2(1+m_1+m_2(\sqrt2-1))}\Bigr)z_1^{m_1}z_2^{m_2}=
\end{equation*}
$$
=z_1\Bigl(c_{00}+\sum\limits_{m_1+m_2\geqslant1}c_{m_1,m_2}z_1^{m_1}z_2^{m_2}\Bigr)^2.
$$
Hence $c_{01}$ is a free parameter and $c_{0,m_2}=0$ for all $m_2\geqslant2$. Further,
$$
c_{10}=\frac{c_{00}^2}{2^{-\sqrt2}\,q^2-q^4}=\frac{4c_{00}^2}{2^{-\sqrt2}-2^{-2}}
$$
and 
\begin{equation*}
\sum\limits_{m_1+m_2\geqslant2}c_{m_1,m_2}\,q^{1+m_1+m_2(\sqrt2-1)}\Bigl(q^{\sqrt2}-q^{1+m_1+m_2(\sqrt2-1)}\Bigr)z_1^{m_1}z_2^{m_2}=
\end{equation*}
$$
=2c_{00}z_1\sum\limits_{m_1+m_2\geqslant1}c_{m_1,m_2}z_1^{m_1}z_2^{m_2}+z_1\Bigl(\sum\limits_{m_1+m_2\geqslant1}c_{m_1,m_2}z_1^{m_1}z_2^{m_2}\Bigr)^2.
$$
If we take $c_{01}$ an arbitrary {\it positive} real number, the last equality will imply that all the coefficients $c_{m_1,m_2}$ are non-negative real 
numbers and
$$
c_{m_1,m_2}\,q^{1+m_1+m_2(\sqrt2-1)}\Bigl(q^{\sqrt2}-q^{1+m_1+m_2(\sqrt2-1)}\Bigr)\geqslant2c_{00}c_{m_1-1,m_2}, \qquad m_1\geqslant1,
$$
whence
$$
c_{m_1,0}\,q^{1+m_1}\Bigl(q^{\sqrt2}-q^{1+m_1}\Bigr)\geqslant2c_{00}c_{m_1-1,0}, \qquad m_1\geqslant1.
$$
Therefore, 
$$
c_{m_1,0}>c_{m_1-1,0}\,q^{-m_1}>c_{m_1-2,0}\,q^{-(m_1-1)-m_1}>{\dots}>c_{00}\,q^{-(m_1+1)m_1/2}.
$$
It follows that the series $\sum_{m_1=1}^{\infty}c_{m_1,0}z_1^{m_1}$ has a zero radius of convergence, hence the series \eqref{ex7} diverges for any 
positive real $z_1$, $z_2$.
\medskip

In the next example of a second order $q$-difference equation possessing a generalized power series solution whose power exponents form a semi-group
generated by {\it two} numbers again, these generators lie on the opposite sides of the line $\cal L$, though $|q|>1$ and the coefficient $A_2$ of
the polynomial $L(\xi)$ is non-zero. Thus one of the assumptions of Theorem 1 is not satisfied again and it seems that one also has divergence here.
\medskip  

{\bf Example 2.} The algebraic $q$-difference equation 
\begin{equation}\label{ex31}
\sigma^2 y-q^{1+\I}\sigma y=y^2+z^2,\qquad q=1.0001\,\I,\qquad \I=\sqrt{-1},
\end{equation}
possesses a generalized formal power series solution 
\begin{equation}\label{ex32}
\varphi=\sum\limits_{m_1+m_2\geqslant 0}c_{m_1,m_2}\,z^{(m_1+1)(1+\I)+m_2(1-\I)}, \qquad c_{m_1,m_2}\in\C,
\end{equation}
where $c_{00}$ is an arbitrary constant and the other coefficients are determined uniquely. This formal solution does also not satisfy 
the assumptions of Theorem 1, since the line $\cal L$ is almost horizontal ($\ln|q|/{\rm arg}\,q\approx0.00006<<1$) and the generators 
$1-\I$, $1+\I$ of the semi-group $\Gamma$ thus lie on the opposite sides of this line.
Note that, at the same time, $|q|>1$ and the coefficient $A_2$ of the polynomial $L(\xi)=\xi(\xi-q^{1+\I})$ is non-zero.

Similarly to the previous example, define 
$$
z_1=z^{1+\I}, \qquad z_2=z^{1-\I}.
$$ 
Then 
\begin{eqnarray*}
	\varphi&=&z_1\sum\limits_{m_1+m_2\geqslant 0}c_{m_1,m_2}\,z_1^{m_1}z_2^{m_2}, \\
	\sigma\varphi&=&z_1\sum\limits_{m_1+m_2\geqslant 0}q^{(m_1+1)(1+\I)+m_2(1-\I)}\,c_{m_1,m_2}\,z_1^{m_1}z_2^{m_2}, \\
	\sigma^2\varphi&=&z_1\sum\limits_{m_1+m_2\geqslant 0}q^{2(m_1+1)(1+\I)+2m_2(1-\I)}\,c_{m_1,m_2}\,z_1^{m_1}z_2^{m_2}.
\end{eqnarray*}
Substituting the above bivariate formal Taylor series into the equation \eqref{ex31} one comes to the equality
\begin{equation*}
\sum\limits_{m_1+m_2\geqslant 0}q^{(m_1+2)(1+\I)+m_2(1-\I)}\left(q^{m_1(1+\I)+m_2(1-\I)}-1\right)c_{m_1,m_2}\,z_1^{m_1}z_2^{m_2}=
\end{equation*}
$$
=z_1\Bigl(\sum\limits_{m_1+m_2\geqslant 0}c_{m_1,m_2}\,z_1^{m_1}z_2^{m_2}\Bigr)^2+z_2.
$$
Setting $m_2=0$ in it we will have 
\begin{equation*}
\sum\limits_{m_1=0}^{\infty}q^{(m_1+2)(1+\I)}\left(q^{m_1(1+\I)}-1\right)c_{m_1,0}\,z_1^{m_1}
=z_1\Bigl(\sum\limits_{m_1=0}^{\infty}c_{m_1,0}\,z_1^{m_1}\Bigr)^2,
\end{equation*}
and whence deduce that $c_{00}\in\C$ is a free parameter,
$$
c_{10}=\frac{c_{00}^2}{q^{3(1+\I)}(q^{1+\I}-1)}, \qquad c_{m_1,0}=\frac{2c_{00}\,c_{m_1-1,0}+ \sum_{l=1}^{m_1-2}c_{l,0}\,c_{m_1-1-l,0}}{q^{(m_1+2)(1+\I)}\left(q^{m_1(1+\I)}-1\right)}, \quad m_1>1.
$$
We note that $|q^{1+\I}|\approx0.2$ and the denominators in the above recurrent formulae for the coefficients descent to zero exponentially as $m_1$ 
tends to infinity. The numerical evidences indicate that the series $\sum_{m_1\geqslant0}|c_{m_1,0}|\,z_1^{m_1}$, with $c_{00}=1$, is rapidly diverging: 
$$
|c_{m_1,0}|>\bigl|q^{1+\I}\bigr|^{-(m_1+2)(m_1+3)/2}>5^{(m_1+2)(m_1+3)/2},
$$
beginning with $m_1=8$, which allows to suppose that the corresponding series (\ref{ex32}) has a zero radius of convergence.
\medskip

{\bf Example 3.} Let us return to the $q$-difference equation \eqref{qP3} from Section 2,
$$
y\,\sigma^2 y-(\sigma y)^2-z^2y^4-z^2=0.
$$  
Under an assumption that the numbers $1-r$, $1+r$, $\pi{\rm i}/\ln q$ are linearly independent over $\Z$, its generalized power series solution 
\eqref{P3sol} is of the form $\varphi=C\,z^r+z^r\,\psi$, where the power exponents of the generalized power series $\psi$ belong to the additive 
semi-group generated by $2-2r$, $2+2r$. In the case where $|q|=1$ we cannot apply Theorem 1 for studying the convergence of such a formal solution,
since the line $\cal L$ coincides with the real axis in this case and the numbers $2-2r$, $2+2r$ lie on different sides of this line. On the other hand, 
for any $q\in\C^*$ with $|q|\ne1$, the line $\cal L$ has a non-zero slope, hence there is an open set $U_q$ of values of the parameter $r$ for which 
the numbers $2-2r$, $2+2r$ lie on one side of this line. Since the polynomial $L$ in this example is $L(\xi)=C(\xi-1)^2$, both of its coefficients $A_0$,
$A_2$ are non-zero and, by Theorem 1, for $r\in U_q$ the series $\varphi$ converges in small sectors $S\subset\C$ with the vertex at the origin.   
\medskip

{\bf Acknowledgment.} A.\,Lastra is partially supported by the project PID2019-105621GB-I00 of Ministerio
de Ciencia e Innovaci\'on, Spain, and by Direcci\'on General de Investigaci\'on e 
Innovaci\'on, Consejer\'ia de Educaci\'on e Investigaci\'on of the Comunidad de Madrid 
(Spain), and Universidad de Alcal\'a under grant CM/JIN/2019-010, Proyectos de I+D 
para J\'ovenes Investigadores de la Universidad de Alcal\'a 2019.

\end{document}